\patchcmd{\ttlh@hang}{\parindent\z@}{\parindent\z@\leavevmode}{}{}
\patchcmd{\ttlh@hang}{\noindent}{}{}{}
\theoremstyle{plain}
\newtheorem{theorem}{Theorem}[section]
\newtheorem{lemma}[theorem]{Lemma}
\newtheorem{proposition}[theorem]{Proposition}
\newtheorem{corollary}[theorem]{Corollary}
\theoremstyle{definition}
\newtheorem{example}[theorem]{Example}
\theoremstyle{remark}
\newtheorem{remark}[theorem]{Remark}
\numberwithin{equation}{section}
\def\XXint#1#2#3{{\setbox0=\hbox{$#1{#2#3}{\int}$ }
\vcenter{\hbox{$#2#3$ }}\kern-.6\wd0}}
\newcommand{\T}{\mathbb{T}}
\newcommand{\Z}{\mathbb{Z}}
\DeclareMathOperator{\Span}{span}
\DeclareMathOperator{\SIZ}{SI/Z}
\DeclareMathOperator{\vol}{vol}
\DeclareMathOperator{\Aut}{Aut}
\DeclareMathOperator{\ind}{ind}
\newcommand{\Ppi}{P_{\pi}}
\newcommand{\Prho}{P_{\rho}}
\newcommand{\Hpi}{\mathcal{H}_{\pi}}
\newcommand{\Hr}{\mathcal{H}_{\rho}}
\def\pker#1{P_{#1}}
\title[On sufficient density conditions for lattice orbits of relative discrete series]{On sufficient density conditions for lattice orbits of relative discrete series}
\subjclass[2020]{22D25, 22E27, 42C30, 42C40, 46L08}
\keywords{Density condition, discrete series, frame, lattice, Riesz sequence}
\author{Ulrik Enstad}
\address{Department of Mathematics,
Stockholm University,
SE-106 91 Stockholm, Sweden.}
\email{ulrik.enstad@math.su.se}
\author{Jordy Timo van Velthoven}
\address{Delft University of Technology,
Mekelweg 4, Building 36,
2628 CD Delft, The Netherlands.}
\email{j.t.vanvelthoven@tudelft.nl}
\begin{document}

\maketitle

\begin{abstract}
This note provides new criteria on a unimodular group $G$ and a discrete series representation $(\pi, \Hpi)$ of formal degree $d_{\pi} > 0$ under which any lattice $\Gamma \leq G$ with $\vol(G/\Gamma) d_{\pi} \leq 1$ (resp. $\vol(G/\Gamma) d_{\pi} \geq 1$) admits $g \in \Hpi$ such that $\pi(\Gamma) g$ is a frame (resp. Riesz sequence). 
The results apply to all projective discrete series of exponential Lie groups. 
\end{abstract}

\section{Introduction}
Let $G$ be a second-countable unimodular group with a lattice $\Gamma \leq G$. For an  irreducible projective unitary representation $(\pi, \Hpi)$ of $G$, let $\pi(\Gamma) g$ be the $\Gamma$-orbit of $g \in \Hpi$, i.e.,
\begin{align*}
\pi(\Gamma) g = \big\{ \pi (\gamma) g : \gamma \in \Gamma \big \}.
\end{align*}
An orbit $\pi(\Gamma) g$ is said to be a \emph{frame} for $\Hpi$ if there exist constants $A, B >0$ such that
\begin{align} \label{eq:frame_ineq}
 A \| f \|_{\Hpi}^2 \leq \sum_{\gamma \in \Gamma} | \langle f, \pi(\gamma) g \rangle |^2 \leq B \|f \|_{\Hpi}^2, \quad f \in \Hpi.
\end{align}
A \emph{Bessel sequence} is a system $\pi(\Gamma) g$ satisfying the upper bound in \eqref{eq:frame_ineq}. The lower bound in \eqref{eq:frame_ineq} implies, in particular, that $g$ is a cyclic vector for the restriction $\pi|_{\Gamma}$. 

A system $\pi(\Gamma) g$ is a \emph{Riesz sequence} if it satisfies inequalities dual to \eqref{eq:frame_ineq}, namely
\begin{align} \label{eq:riesz_ineq}
 A \| c \|_{\ell^2}^2 \leq \bigg\| \sum_{\gamma \in \Gamma} c_{\gamma} \pi(\gamma) g \bigg\|_{\Hpi}^2 \leq B \| c \|_{\ell^2}^2, \quad c \in \ell^2 (\Gamma).
\end{align}
If $\pi(\Gamma) g$ satisfies the upper bound in \eqref{eq:riesz_ineq}, then it is a Bessel sequence. 
The lower bound in \eqref{eq:riesz_ineq} implies, in particular, that a Riesz sequence is linearly independent. 

This note is concerned with the existence of vectors $g \in \Hpi$ such that its orbit $\pi(\Gamma) g$ is a frame or Riesz sequence. A simple necessary condition is that if $\pi(\Gamma) g$ is a frame or Riesz sequence, so that it admits a Bessel constant $B > 0$, then $g \in \Hpi \setminus \{0\}$ satisfies
\[
\int_G | \langle g, \pi(x) g \rangle |^2 \; d\mu_G (x) = \int_{G/\Gamma} \sum_{\gamma \in \Gamma} |\langle \pi(\gamma)^* g, \pi(x) g \rangle |^2 \; d\mu_{G/\Gamma} (x\Gamma) \leq \vol(G/\Gamma) B \| g \|_{\Hpi}^2 < \infty.
\]
An irreducible $\pi$ with a non-zero $L^2$-integrable matrix coefficient is called a (projective) \emph{discrete series}; see Section \ref{sec:discreteseries} for several basic properties. Since nilpotent and (unimodular) exponential Lie groups do not admit genuine representations that are square-integrable in the strict sense, 
the use of projective representations is particularly convenient; see Section \ref{sec:relative_discreteseries}.

In \cite{bekka2004square, enstad2021density}, it has been shown that the existence of frames and Riesz sequences of the form $\pi(\Gamma) g$ can be completely characterized in terms of properties of an associated $\sigma$-twisted convolution operator on $\ell^2 (\Gamma)$, with $\sigma : G \times G \to \mathbb{T}$ being the 2-cocycle of the projective representation $\pi$. An element $\gamma \in \Gamma$ (and its conjugacy class $C_{\Gamma}(\gamma)$ in $\Gamma$) is called \emph{$\sigma$-regular} in $\Gamma$ if $\gamma$ satisfies $\sigma(\gamma, \gamma') = \sigma(\gamma', \gamma)$ whenever $\gamma' \in Z_{\Gamma}(\gamma)$, where $Z_{\Gamma}(\gamma)$ denotes the centralizer of $\gamma$ in $\Gamma$. 

The following theorem contains the main results of \cite{bekka2004square, enstad2021density} for frames and Riesz sequences.

\begin{theorem}[\cite{bekka2004square, enstad2021density}] \label{thm:cdim_unimodular}
Let $(\pi, \Hpi)$ be a discrete series $\sigma$-representation of $G$ of formal degree $d_{\pi} > 0$. Let $\Gamma \leq G$ be a lattice. For a unit vector $\eta \in \Hpi$, define $\phi : \Gamma \to \mathbb{C}$ by
\begin{align*}
\phi(\gamma)= 
\begin{cases}
\dfrac{d_{\pi}}{|C_{\Gamma} (\gamma)|} \displaystyle\int_{G/Z_{\Gamma}(\gamma)} \overline{\sigma(\gamma, y)} \sigma(y, y^{-1} \gamma y) \langle \eta, \pi(y^{-1} \gamma y) \eta \rangle \dif{(yZ_{\Gamma}(\gamma))},
& \begin{matrix} \text{if $C_{\Gamma}(\gamma)$ is finite} \\  \text{and $\sigma$-regular;} \end{matrix} \vspace{0.1cm} \\
0, & \text{otherwise}.
\end{cases}
\end{align*}
Let $C_{\phi}$ be the $\sigma$-twisted convolution operator on $\ell^2 (\Gamma)$ defined by
\begin{align} \label{eq:convolution}
(C_{\phi} c)(\gamma') := \sum_{\gamma \in \Gamma} \phi (\gamma) \sigma(\gamma, \gamma^{-1} \gamma')  c(\gamma^{-1} \gamma'), \quad \gamma' \in \Gamma, \; c \in \ell^2 (\Gamma). 
\end{align}
Then the following assertions hold:
\begin{enumerate}
    \item[(i)] There exists $g \in \Hpi$ such that $\pi(\Gamma) g$ is a frame if and only if $C_{\phi} \leq I_{\ell^2}$; 
    \item[(ii)] There exists $g \in \Hpi$ such that $\pi(\Gamma) g$ is a a Riesz sequence if and only if $C_{\phi} \geq I_{\ell^2}$.
\end{enumerate}
\end{theorem}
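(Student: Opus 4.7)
The plan is to reduce both existence claims to operator inequalities in the $\sigma$-twisted group von Neumann algebra $\vN$, exploiting the standard synthesis/analysis operator framework together with the fact that for a discrete series $(\pi,\Hpi)$ the restriction $\pi|_\Gamma$ generates a von Neumann algebra $*$-isomorphic to $\vN$. For $g \in \Hpi$, let $T_g : \ell^2(\Gamma) \to \Hpi$, $c \mapsto \sum_\gamma c_\gamma \pi(\gamma) g$, be the (densely defined) synthesis operator. Then $\pi(\Gamma) g$ is Bessel precisely when $T_g$ extends boundedly, a frame when in addition $T_g T_g^*$ is bounded below on $\Hpi$, and a Riesz sequence when $T_g^* T_g$ is bounded below on $\ell^2(\Gamma)$.

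The first step is the module-theoretic setup. Because $\pi$ is a discrete series $\sigma$-representation and $\Gamma \leq G$ is a lattice in the unimodular $G$, the representation $\pi|_\Gamma$ extends to a faithful normal $*$-representation of $\vN$ on $\Hpi$, and the operators $T_g, T_g^*$ become $\vN$-module intertwiners, so $T_g T_g^* \in \pi(\vN)'$ while $T_g^* T_g \in \vN$. Using a Plancherel-type orthogonality relation together with the canonical tracial state on $\vN$, the coupling constant (Murray--von Neumann dimension) of $\Hpi$ as a $\vN$-module is computed to be $d_\pi \vol(G/\Gamma)$. This provides the scalar comparison suggested by the two regimes $d_\pi \vol(G/\Gamma) \leq 1$ and $\geq 1$, but the sharper operator inequalities of the theorem require identifying the center-valued dimension explicitly.

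The crucial step is to identify $C_\phi$ as a specific element of $\vN$ constructed from $\eta$. The operator $T_\eta^* T_\eta$ is a left $\sigma$-twisted convolution by the function $\gamma \mapsto d_\pi \langle \eta, \pi(\gamma)\eta \rangle$ (up to $\sigma$-cocycle normalization). Applying the canonical center-valued conditional expectation of $\vN$, one uses that the projection of a convolver supported on a single $\Gamma$-orbit into the center is nonzero precisely when the underlying conjugacy class is finite and $\sigma$-regular; the integral over $G/Z_\Gamma(\gamma)$ together with the weight $d_\pi / |C_\Gamma(\gamma)|$ and the cocycle factor $\overline{\sigma(\gamma,y)}\sigma(y, y^{-1}\gamma y)$ then implement exactly this expectation against the formal-degree normalization, producing the formula for $\phi$ stated in the theorem.

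Given this identification, assertion (i) follows from the equivalence, for frame existence, with the existence of a $\vN$-module embedding $\Hpi \hookrightarrow \ell^2(\Gamma)$: polar decomposition of a witnessing analysis operator and comparison of center-valued dimensions translate this precisely to $C_\phi \leq I_{\ell^2}$, while the reverse implication builds a suitable $g$ from $\eta$ by applying an appropriate element of $\vN$. Assertion (ii) is entirely symmetric, with the roles of $\Hpi$ and $\ell^2(\Gamma)$ exchanged. The main technical obstacle I foresee is the cocycle bookkeeping: checking that the specific factor $\overline{\sigma(\gamma,y)}\sigma(y, y^{-1}\gamma y)$ delivers the canonical center-valued expectation on the nose, and that the formal-degree factor and the normalization between Haar measure on $G$ and counting measure on $\Gamma$ conspire to give $C_\phi$ rather than a rescaled variant, is where the computations from \cite{bekka2004square, enstad2021density} demand the greatest care.
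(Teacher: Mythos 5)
First, a point of comparison: this paper does not actually prove \Cref{thm:cdim_unimodular}; it is quoted from \cite{bekka2004square, enstad2021density}, and the surrounding text only records that the assertions follow from the theory of the center-valued von Neumann dimension (coupling operator) of $\Hpi$ over the twisted group von Neumann algebra. Your outline follows precisely that strategy (synthesis/analysis operators as module maps, comparison of $\Hpi$ with $\ell^2(\Gamma)$ via center-valued dimensions, single generators for submodules of $\ell^2(\Gamma)$), so at the level of architecture it agrees with the cited proofs rather than offering a different route.

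However, the step you yourself single out as crucial -- the identification of the coupling operator with $C_\phi$ -- is described by a mechanism that does not work, and this is a genuine gap rather than mere bookkeeping. You propose to apply the center-valued conditional expectation of $\vN$ to $T_\eta^*T_\eta$. But for an arbitrary unit vector $\eta$ the orbit $\pi(\Gamma)\eta$ need not be a Bessel sequence, so $T_\eta$ need not be bounded and $T_\eta^*T_\eta$ need not define an element of $\vN$ at all (its Gram entries are $\langle \pi(\gamma)\eta,\pi(\gamma')\eta\rangle$, with no factor $d_\pi$). More fundamentally, the center-valued trace of a twisted convolver by a function on $\Gamma$ only produces averages over the finite $\sigma$-regular conjugacy classes of $\Gamma$; it cannot produce the integral over $G/Z_\Gamma(\gamma)$, the factor $d_\pi$, or $\vol(G/\Gamma)$, since the ambient group and its Haar measure never enter such a computation. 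In the proofs of \cite{bekka2004square, enstad2021density}, $G$ enters through the commutant: by the orthogonality relations, $T\mapsto d_\pi\int_{G/\Gamma}\langle T\,\pi(x)\eta,\pi(x)\eta\rangle \, d\mu_{G/\Gamma}(x\Gamma)$ is the canonical trace on $\pi(\Gamma)'$, and the coupling operator arises from comparing this trace with the canonical center-valued trace on $\vN$; unfolding the $G/\Gamma$-integral over the finite class $C_\Gamma(\gamma)$ is what yields the $G/Z_\Gamma(\gamma)$-integral, the weight $1/|C_\Gamma(\gamma)|$, and the cocycle factor $\overline{\sigma(\gamma,y)}\sigma(y,y^{-1}\gamma y)$. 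A further small inaccuracy: $\pi|_\Gamma$ extends to a normal representation of $\vN$ because it is quasi-contained in the $\sigma$-twisted regular representation, but this extension need not be \emph{faithful} (e.g.\ a holomorphic discrete series of $\mathrm{SL}(2,\mathbb{R})$ with $-I\in\Gamma$); faithfulness is not needed, but asserting it glosses over the role of the central support. As it stands, your text is an accurate outline of the known approach with the decisive computation deferred to the references, and the mechanism you propose for that computation would not deliver the stated formula for $\phi$.
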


The convolution operator $C_{\phi}$ defined in \Cref{thm:cdim_unimodular} determines the so-called \emph{center-valued von Neumann dimension} or \emph{coupling operator} of $\Hpi$ as a module over 
the (twisted) group von Neumann algebra of $\Gamma$. 
Assertions (i) and (ii) in \Cref{thm:cdim_unimodular} are consequences of the 
underlying theory of von Neumann algebras. The paper \cite{bekka2004square}
provides the statements of Theorem \ref{thm:cdim_unimodular} for genuine representations and frames (cf. \cite[Theorem 1]{bekka2004square}), and \cite{enstad2021density} provides an extension to possibly projective representations and Riesz sequences (cf. \cite[Theorem 1.1]{enstad2021density}). 

A direct consequence of Theorem \ref{thm:cdim_unimodular} are the following necessary 
\textquotedblleft density conditions".

\begin{corollary} \label{cor:necessary_density_intro}
With the assumptions and notations as in Theorem \ref{thm:cdim_unimodular},
\begin{enumerate}
    \item[(i)] If there exists $g \in \Hpi$ such that $\pi(\Gamma) g$ is a frame, then $\vol(G/\Gamma) d_{\pi} \leq 1$.
    \item[(ii)] If there exists $g \in \Hpi$ such that $\pi(\Gamma) g$ is a Riesz sequence, then $\vol(G/\Gamma) d_{\pi} \geq 1$. 
\end{enumerate}
\end{corollary}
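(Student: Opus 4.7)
The plan is to deduce both density conditions directly from \Cref{thm:cdim_unimodular} by testing the operator inequalities $C_{\phi} \leq I_{\ell^2}$ and $C_{\phi} \geq I_{\ell^2}$ against a single well-chosen vector, namely the indicator $\delta_e \in \ell^2(\Gamma)$ of the identity $e \in \Gamma$. Since $\| \delta_e \|_{\ell^2} = 1$, once I show that
\[
\langle C_{\phi} \delta_e, \delta_e \rangle_{\ell^2(\Gamma)} = d_{\pi} \, \vol(G/\Gamma),
\]
the operator inequalities will translate immediately into $d_{\pi} \vol(G/\Gamma) \leq 1$ in case (i) and $d_{\pi} \vol(G/\Gamma) \geq 1$ in case (ii).

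First I would unpack $\langle C_{\phi} \delta_e, \delta_e \rangle$ via the convolution formula \eqref{eq:convolution}. After possibly replacing $\sigma$ by a cohomologous normalized 2-cocycle so that $\sigma(e, \gamma) = \sigma(\gamma, e) = 1$ for every $\gamma \in \Gamma$, the $\delta_e$ on the right of \eqref{eq:convolution} forces $\gamma = \gamma' = e$ in the defining sum, and one reads off $\langle C_{\phi} \delta_e, \delta_e \rangle = \phi(e)$.

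Next I would evaluate $\phi(e)$ from its definition. The conjugacy class $C_{\Gamma}(e) = \{e\}$ is finite with $|C_{\Gamma}(e)| = 1$, the centralizer $Z_{\Gamma}(e)$ equals $\Gamma$, and $e$ is $\sigma$-regular since $\sigma(e, \gamma') = 1 = \sigma(\gamma', e)$ under the above normalization. Hence the nontrivial branch of the definition of $\phi$ applies at $e$; because $y^{-1} e y = e$, because $\eta$ is a unit vector, and because $\sigma(e, y) = \sigma(y, e) = 1$, the integrand reduces to the constant $1$, and the integral over $G/Z_{\Gamma}(e) = G/\Gamma$ yields $\phi(e) = d_{\pi} \vol(G/\Gamma)$, as needed.

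I do not foresee a genuine obstacle in this argument: the corollary is essentially a bookkeeping consequence of \Cref{thm:cdim_unimodular}, recording the numerical value of the coupling operator evaluated at the identity. The only points requiring any care are the normalization of the cocycle $\sigma$ and the verification that $e$ is $\sigma$-regular with finite conjugacy class, both of which are straightforward.
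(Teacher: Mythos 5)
Your argument is correct and is essentially the ``direct consequence'' of \Cref{thm:cdim_unimodular} that the paper intends (it gives no written proof, only a pointer to an alternative frame-theoretic argument): evaluating the quadratic form of $C_{\phi}$ at $\delta_e$ gives $\langle C_{\phi}\delta_e, \delta_e\rangle = \phi(e) = \vol(G/\Gamma)\, d_{\pi}$, and the operator inequalities in (i) and (ii) then yield the two density conditions. One minor simplification: since the paper's definition of a $2$-cocycle already imposes $\sigma(e,e)=1$, the cocycle identity forces $\sigma(e,y)=\sigma(y,e)=1$ for all $y$, so no passage to a cohomologous normalized cocycle is needed.
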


For a simple proof of Corollary \ref{cor:necessary_density_intro} based on frame and representation theory, see \cite{romero2021density}.

The density conditions provided by \Cref{cor:necessary_density_intro} are generally not sharp, in the sense that they are not sufficient for the existence of frames and Riesz sequences of the form $\pi(\Gamma) g$. For example, this might fail for discrete series of semi-simple Lie groups with a non-trivial center, cf. \cite[Example 1]{bekka2004square}. 
However, for semi-simple Lie groups with a trivial center, the convolution kernel $\phi$ of the operator $C_{\phi}$ in Theorem \ref{thm:cdim_unimodular} is simply given by 
\begin{align} \label{eq:coincidence}
\phi = \vol(G/\Gamma) d_{\pi} \cdot \delta_e .
\end{align}
In general, if the identity \eqref{eq:coincidence} holds, then the density conditions provided by \Cref{cor:necessary_density_intro} are also sufficient for the existence of frames and Riesz sequences. In particular, this holds for lattices in which every non-trivial $\sigma$-regular conjugacy class has infinite cardinality; 
such pairs $(\Gamma, \sigma)$ are sometimes said to satisfy \textquotedblleft Kleppner's condition" \cite{kleppner1962structure}. 

It is the aim of this note to provide new criteria under which the convolution kernel $\phi$ in Theorem \ref{thm:cdim_unimodular} takes the simple form \eqref{eq:coincidence}. In particular, this will imply the optimality of the density conditions provided by \Cref{cor:necessary_density_intro}. 

In order to state the key result of this note, let $B(G)$ be the set of all elements with pre-compact conjugacy classes in $G$. Then $B(G)$ is a normal subgroup of $G$ containing the center $Z(G)$ and was studied for classes of locally compact groups in, e.g., \cite{tits1964automorphismes, greenleaf1974unbounded, sit1975on, moskowitz1978some}. In particular, it was shown that
exponential solvable Lie groups and reductive algebraic groups (with no simple factors) have the property $B(G) = Z(G)$, cf. \Cref{ex:bounded_conjugacy} for references.

The following result provides criteria for a unimodular $G$ with $B(G) = Z(G)$
under which the convolution operator $C_{\phi}$ of \Cref{thm:cdim_unimodular} is a scalar multiple of the identity operator. 

\begin{theorem}\label{thm:cvd_rel}
Let $G$ be such that $B(G) = Z(G)$ and let $\Gamma \leq G$ be a lattice. Suppose that either $G$ is locally connected or $\Gamma$ is co-compact.
Suppose $(\pi, \Hpi)$ is a discrete series of formal degree $d_{\pi} > 0$ such that the projective kernel $\pker{\pi} := \{ x \in G : \pi(x) \in \mathbb{C} \cdot I_{\Hpi} \}$ is trivial. Then the twisted convolution operator $C_{\phi}$ on $\ell^2 (\Gamma)$ defined in \eqref{eq:convolution} is given by
\begin{align} \label{eq:coincidence2}
    C_{\phi} =  \vol(G/\Gamma)d_{\pi} \cdot I_{\ell^2}. 
\end{align} 
Consequently, the following assertions hold:
\begin{enumerate}
    \item[(i)] If $\vol(G/\Gamma) d_{\pi} \leq 1$, then there exists a frame $\pi(\Gamma) g$ for $\Hpi$.  
        \item[(ii)] If $\vol(G/\Gamma) d_{\pi} \geq 1$, then there  exists a Riesz sequence $\pi(\Gamma) g$ in $\Hpi$.  
\end{enumerate}
\end{theorem}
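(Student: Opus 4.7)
The plan is to compute the convolution kernel $\phi$ from \Cref{thm:cdim_unimodular} explicitly and verify that
\[
\phi(\gamma) = \begin{cases} \vol(G/\Gamma)\, d_{\pi}, & \gamma = e, \\ 0, & \gamma \neq e. \end{cases}
\]
Granted this, $(C_{\phi} c)(\gamma') = \phi(e)\, c(\gamma')$ immediately yields \eqref{eq:coincidence2}, and assertions (i), (ii) follow from the operator inequalities in \Cref{thm:cdim_unimodular}. The value $\phi(e) = \vol(G/\Gamma) d_{\pi}$ is a direct substitution ($Z_{\Gamma}(e) = \Gamma$, $|C_{\Gamma}(e)| = 1$, $\sigma$ normalized, $\pi(e) = I$), so the real content is the vanishing of $\phi$ on $\Gamma \setminus \{e\}$; by definition only $\gamma$ with finite, $\sigma$-regular $C_{\Gamma}(\gamma)$ need to be considered.

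The first step is to force any such $\gamma$ into $Z(G)$. Finiteness of $C_{\Gamma}(\gamma)$ means $Z_{\Gamma}(\gamma)$ has finite index in $\Gamma$, so it is itself a lattice in $G$ and is contained in $Z_G(\gamma)$. When $\Gamma$ is cocompact in $G$, so is $Z_{\Gamma}(\gamma)$, forcing $Z_G(\gamma)$ to be cocompact and hence the $G$-conjugacy class $G/Z_G(\gamma)$ to be compact, i.e., $\gamma \in B(G)$. The locally connected case reaches the same conclusion via the open identity component $G^0$. The hypothesis $B(G) = Z(G)$ then gives $\gamma \in Z(G)$.

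The second step handles $\gamma \in Z(G) \cap \Gamma$ with $\gamma \neq e$ and $\sigma$-regular in $\Gamma$. Centrality gives $y^{-1}\gamma y = \gamma$, so the formula for $\phi(\gamma)$ reduces to
\[
\phi(\gamma) = d_{\pi}\, \langle \eta, \pi(\gamma)\eta\rangle \int_{G/\Gamma} c(y)\, d(y\Gamma), \qquad c(y) := \overline{\sigma(\gamma,y)}\,\sigma(y,\gamma).
\]
A routine 2-cocycle calculation (using $\gamma \in Z(G)$) shows that $c : G \to \mathbb{T}$ is a continuous homomorphism, and $\sigma$-regularity of $\gamma$ in $\Gamma$ gives $c|_{\Gamma} \equiv 1$. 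If $c \equiv 1$ on all of $G$, the identity $\pi(x)\pi(\gamma)\pi(x)^{-1} = c(x)\pi(\gamma)$ combined with Schur's lemma for the irreducible projective representation $\pi$ forces $\pi(\gamma) \in \mathbb{C}\cdot I_{\Hpi}$, i.e., $\gamma \in \pker{\pi} = \{e\}$, contradicting $\gamma \neq e$. Hence $c$ is a nontrivial continuous character; $K := \ker c$ is a proper closed subgroup containing $\Gamma$; Weil's formula gives $\vol(G/K)\vol(K/\Gamma) = \vol(G/\Gamma) < \infty$, so $G/K$ is a compact abelian group on which $c$ descends to a nontrivial character; orthogonality of characters together with fiber integration along $G/\Gamma \to G/K$ yields $\int_{G/\Gamma} c\, d(y\Gamma) = 0$, so $\phi(\gamma) = 0$.

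I expect the main obstacle to be the locally connected case of the first step, where $\Gamma$ need not be cocompact and the clean argument for propagating cocompactness from $Z_{\Gamma}(\gamma)$ to $Z_G(\gamma)$ is unavailable; one presumably argues via openness of $G^0$, finite index of $\Gamma G^0$ in $G$, and the known structural properties of $B(G)$ for locally connected groups. The remaining ingredients---Weil's formula, Schur for projective representations, and character orthogonality on a compact abelian quotient---are standard.
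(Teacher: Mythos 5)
Your overall strategy coincides with the paper's: reduce any $\sigma$-regular $\gamma$ with finite $C_\Gamma(\gamma)$ to a central element of $G$, observe that $\omega_\gamma(y)=\overline{\sigma(\gamma,y)}\sigma(y,\gamma)$ is a character of $G$ that is trivial on $\Gamma$, and show that it cannot be trivial on all of $G$ unless $\gamma=e$, since $\omega_\gamma\equiv 1$ makes $\pi(\gamma)$ commute with $\pi(G)$ and Schur then puts $\gamma$ in the trivial projective kernel (the paper packages this as \Cref{lem:proj_ker}). Your way of killing the integral (pass to the compact abelian quotient $G/\ker\omega_\gamma$ and use character orthogonality) differs mildly from the paper's, which simply uses $G$-invariance of $\mu_{G/\Gamma}$ to get $\int\omega_\gamma=\omega_\gamma(y')\int\omega_\gamma$ for all $y'$; the paper's trick is slightly cleaner because it needs no topology on $\omega_\gamma$ at all, whereas your route needs $\omega_\gamma$ to be continuous (the cocycle $\sigma$ is only Borel, so you must invoke automatic continuity of measurable characters of a second-countable locally compact group). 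That is a fixable, minor point.

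The genuine gap is exactly where you flagged it: the reduction of $\gamma$ to $B(G)$ in the locally connected, non-cocompact case. Your sketch via openness of $G^0$ and finite index of $\Gamma G^0$ does not produce the needed compactness, and no elementary argument of that kind is known to. What the paper actually does (in \Cref{lem:lattices}) is: finiteness of $C_\Gamma(\gamma)$ makes $Z_\Gamma(\gamma)$ of finite index in $\Gamma$, hence $G/Z_\Gamma(\gamma)$ carries a finite invariant measure (\cite[Lemma 1.6]{raghunathan1972discrete}); pushing this forward along $G/Z_\Gamma(\gamma)\to G/Z_G(\gamma)$ gives a finite invariant measure on $G/Z_G(\gamma)$; and then one invokes a nontrivial external theorem of Sit \cite{sit1976compactness} (alternatively Greenleaf--Moskowitz) asserting that, for locally connected $G$, a centralizer quotient $G/Z_G(\gamma)$ of finite invariant measure is compact, whence $\gamma\in B(G)=Z(G)$. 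Without this input (or an equivalent), your proof covers only the cocompact case, where your argument (finite-index centralizer is a uniform lattice inside $Z_G(\gamma)$, so $G/Z_G(\gamma)$ is compact) is correct and matches the paper. Everything downstream of the reduction — the value $\phi(e)=\vol(G/\Gamma)d_\pi$, the cocycle computation, the vanishing of $\phi$ off $e$, and deducing (i), (ii) from \Cref{thm:cdim_unimodular} — is sound.
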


Theorem \ref{thm:cvd_rel} is applicable to all cases in which the necessary density conditions of \Cref{cor:necessary_density_intro} are known to be sharp, namely for abelian groups \cite[Corollary 4.6]{enstad2021density}, linear algebraic semi-simple groups \cite[Theorem 2]{bekka2004square} and square-integrable representations modulo the center of nilpotent Lie groups \cite[Theorem 3]{bekka2004square}. In addition, it is applicable to exponential Lie groups and reductive algebraic groups and and it allows to treat representations that are only square-integrable modulo their projective kernel, since any such a representation is naturally treated as a projective discrete series of the quotient (cf. Section \ref{sec:relative_discreteseries}). 

The assumption in Theorem \ref{thm:cvd_rel} that the projective kernel $\Ppi$ is trivial is essential for its validity. For example, both conclusions (i) and (ii) fail for a holomorphic discrete series $\pi$ of $\mathrm{SL}(2, \mathbb{R})$ (cf. \cite[Example 2]{bekka2004square}), where $\{-I, I \} \subseteq \Ppi$, but \Cref{thm:cvd_rel} is applicable to the (projective) discrete series of $\mathrm{PSL}(2, \mathbb{R}) = \mathrm{SL}(2, \mathbb{R}) / \{ -I, I \}$. On the other hand, for the existence of Riesz sequences $\pi(\Gamma) g$ in general, it is necessary that $\pi|_{\Gamma}$ acts projectively faithful. 

A particular motivation for obtaining \Cref{thm:cvd_rel} was to investigate the optimality of the density conditions in Corollary \ref{cor:necessary_density_intro} for the existence of frames and Riesz sequences for general exponential Lie groups, i.e., Lie groups for which the exponential map is a diffeomorphism. 
For a description of the projective discrete series of an exponential Lie group in terms of the Kirillov correspondence, see \cite{sund1978multiplier, moscovici1978quantization}; in particular, cf. \cite[Proposition 4]{sund1978multiplier}. 

\begin{theorem} \label{thm:main_exponential}
Let $G$ be an exponential solvable Lie group and let $\Gamma \leq G$ be a lattice. Let $(\pi, \Hpi)$ be a projective discrete series of $G$ of formal degree $d_{\pi} > 0$. Then the conclusions of Theorem \ref{thm:cvd_rel} hold.
\end{theorem}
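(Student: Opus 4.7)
The plan is to derive \Cref{thm:main_exponential} as a direct consequence of \Cref{thm:cvd_rel} by verifying each of its hypotheses in the present setting. The verification splits into three essentially independent checks; the first two are immediate, and the third is the crux.

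The alternative ``$G$ locally connected or $\Gamma$ co-compact'' of \Cref{thm:cvd_rel} is automatic, since any exponential solvable Lie group is a connected Lie group, and unimodularity of $G$ is forced by the existence of $\Gamma \leq G$.

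The structural identity $B(G) = Z(G)$ is a classical fact for exponential solvable Lie groups, referenced in the paper via \Cref{ex:bounded_conjugacy} and the works of Moskowitz and Tits; the underlying reason is that the $\mathrm{Ad}(G)$-orbit in $\frakg$ of any non-central element is a closed, non-compact submanifold. I would simply quote this.

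The only genuinely non-trivial input is the triviality of $\pker{\pi}$. I would argue in two steps. First, the hypothesis that $\pi$ is a (projective) discrete series forces $\pker{\pi}$ to be compact: for $x \in \pker{\pi}$ one has $|\langle g, \pi(x) g\rangle|^2 = \|g\|^4$, and since this matrix coefficient is right-$\pker{\pi}$-invariant on $G$, Weil's integration formula applied to the finite integral $\int_G |\langle g, \pi(x) g\rangle|^2 \, d\mu_G(x)$ forces $\pker{\pi}$ to have finite Haar measure, i.e.\ to be compact. Second, an exponential solvable Lie group admits no non-trivial compact subgroup: since $\exp \colon \frakg \to G$ is a diffeomorphism, every $h_0 \in G \setminus \{e\}$ has infinite order and the powers $\{h_0^n\}_{n \in \mathbb{Z}}$ have no convergent subsequence in $G$, so no compact subset of $G$ can contain them. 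Combining these two facts gives $\pker{\pi} = \{e\}$, and \Cref{thm:cvd_rel} then delivers both conclusions of \Cref{thm:main_exponential}. The main obstacle is the first of these two steps, where the ``discrete series'' hypothesis enters together with the unimodularity needed to invoke Weil's formula.
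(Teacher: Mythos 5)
Your proposal is correct and follows essentially the same route as the paper: quote $B(G)=Z(G)$ for exponential groups, deduce compactness of $\pker{\pi}$ from square-integrability via Weil's formula (this is exactly \Cref{lem:proj_ker0}), conclude triviality because an exponential group has no non-trivial compact subgroups, and then invoke \Cref{thm:cvd_rel}. The only cosmetic difference is that you re-derive by hand two facts the paper handles by citation (\Cref{lem:proj_ker0} and the absence of compact subgroups), and your elementary argument for the latter via $h_0^n=\exp(nX)$ escaping every compact set is valid.
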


Theorem \ref{thm:main_exponential} covers, in particular, projective representations obtained from genuine representations that are square-integrable modulo the center (see Remark \ref{rem:exponential}).  The existence of frames for such representations of nilpotent Lie groups were shown in \cite{bekka2004square} (cf. \cite[Theorem 3]{bekka2004square} 
and \cite[Corollary 4]{bekka2004square}).
A statement on Riesz sequences does not seem to follow easily from \cite[Theorem 3]{bekka2004square}. 
On the other hand, the existence of Riesz sequences follows transparently from \Cref{thm:main_exponential}, which makes it relevant even for the special case of nilpotent groups. 

A non-nilpotent example to which \Cref{thm:main_exponential} is applicable is given in Section \ref{sec:example}.

\section{Restrictions of discrete series to lattices} \label{sec:restrictions_discreteseries}

Throughout, unless stated otherwise, $G$ denotes a second-countable unimodular locally compact group. A fixed Haar measure on $G$ will be denoted by $\mu_G$. If $H \leq G$ is a closed subgroup with Haar measure $\mu_H$, then there exists a unique $G$-invariant Radon measure $\mu_{G/H}$ on the space $G/H$ of left cosets of $H$ such that Weil's formula holds:
\begin{equation}
    \int_G f(x) \dif{\mu_G(x)} = \int_{G/H} \int_H f(xy) \dif{\mu_H(y)} \dif{\mu_{G/H}(xH)} , \quad f \in L^1(G). \label{eq:weil}
\end{equation}
The measure $\mu_{G/H}$ will always be assumed to be normalized such that \eqref{eq:weil} holds. If $H$ is discrete, then $\mu_H$ will be assumed to be the counting measure.

\subsection{Bounded conjugacy classes}
For a subset $S \subseteq G$, the centralizer of $S$ in $G$ is denoted by $Z_G (S) = \{ x \in G: xs = sx, \; \forall s \in S \}$. In particular, we write $Z_G(x) = Z_G(\{ x \})$ for $x \in G$, and $Z(G) = Z_G (G)$. For $x \in G$, it conjugacy class is $C_G(x) = \{ y x y^{-1} : y \in G \}$. The map $y Z_G(x) \mapsto y x y^{-1}$ is a continuous bijection from $G/Z_G(x)$ onto $C_G(x)$.

The conjugacy class $C_G (x)$ of $x \in G$ is called \emph{bounded} if its closure is compact. The set of all elements $x \in G$ for which $C_G (x)$ is bounded will be denoted by $B(G)$. The set $B(G)$ is a normal subgroup in $G$ containing the center $Z(G)$. 

An automorphism $\alpha \in \Aut(G)$ is said to be of \emph{bounded displacement} if $\{ x^{-1} \alpha (x) : x \in G \}$ is pre-compact. It is readily verified that an inner automorphism $\alpha_y : G \to G, \; x \mapsto y^{-1} x y$ is of bounded displacement if and only if 
$y \in B(G)$. 
If $G$ admits no non-trivial automorphisms of bounded displacement, then $B(G) = Z(G)$. 

Locally compact groups $G$ for which $B(G) = Z(G)$ will play a key role in this note. The following example lists (classes of) groups for which this condition is satisfied.

\begin{example} \label{ex:bounded_conjugacy}
The condition $B(G) = Z(G)$ holds in each of the following cases:
\begin{enumerate}[(a)]
    \item Abelian groups.
    \item Connected, simply connected nilpotent Lie groups (cf. \cite[Theorem 1]{tits1964automorphismes}). 
    \item Exponential solvable Lie groups $G$, i.e., the exponential map $\exp : \mathfrak{g} \to G$ is a diffeomorphism, (cf. \cite[Theorem 9.4]{greenleaf1974unbounded} or \cite[Corollary 1.3]{moskowitz1978some}).
    \item Connected, simply connected complex analytic Lie groups (cf. \cite[Theorem 9.4]{greenleaf1974unbounded}).
    \item Connected semi-simple Lie groups with no compact factors (cf. \cite[Theorem 9.1]{greenleaf1974unbounded}). 
    \item Connected reductive linear algebraic groups with no simple factors (cf. \cite[Theorem 2.4]{sit1975on}).
\end{enumerate}
\end{example}

\subsection{Lattices} 
A discrete subgroup $\Gamma \leq G$ is said to be a \emph{lattice} if the unique invariant Radon measure on $G/\Gamma$ provided by \eqref{eq:weil} is finite. A lattice $\Gamma$ is called \emph{uniform} if $G/\Gamma$ is compact. For classes of amenable groups, including connected solvable Lie groups, 
any lattice is automatically uniform, see \cite{mostow1962homogeneous, bader2019lattices}.

\begin{lemma}\label{lem:lattices}
Let $G$ be such that $B(G) = Z(G)$ and let $\Gamma \leq G$ be a lattice. Suppose that either $G$ is locally connected or that $\Gamma \leq G$ is  uniform. Then the following assertions hold:
\begin{enumerate}[(i)]
    \item For every $\gamma \in \Gamma$, the conjugacy class $C_{\Gamma} (\gamma)$ in $\Gamma$ is either trivial or infinite.
    \item The centralizer $Z_G(\Gamma)$ of $\Gamma$ in $G$ equals the center $Z(G)$ of $G$.
\end{enumerate}
\end{lemma}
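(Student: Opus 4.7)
The plan is to reduce both statements to a single claim: if $x \in G$ satisfies $[\Gamma : Z_\Gamma(x)] < \infty$, then $x \in B(G)$, so that the hypothesis $B(G) = Z(G)$ forces $x \in Z(G)$. For (ii), any $x \in Z_G(\Gamma)$ trivially satisfies $Z_\Gamma(x) = \Gamma$, and the conclusion follows at once. For (i), observing that $|C_\Gamma(\gamma)| = [\Gamma : Z_\Gamma(\gamma)]$, finiteness of the conjugacy class triggers the claim and yields $\gamma \in Z(G)$, hence $C_\Gamma(\gamma) = \{\gamma\}$. Note that $\Gamma' := Z_\Gamma(x)$ is itself a lattice in $G$ (as a finite-index subgroup of $\Gamma$) and preserves the uniformity property of $\Gamma$.

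In the uniform case I would argue directly. Fix a pre-compact Borel fundamental domain $K \subseteq G$ for the right action of $\Gamma'$ on $G$. For any $g \in G$, factor $g = k\gamma$ with $k \in K$, $\gamma \in \Gamma'$. Since $\gamma$ centralizes $x$,
\[
gxg^{-1} = k(\gamma x \gamma^{-1}) k^{-1} = k x k^{-1},
\]
so that $C_G(x) \subseteq K x K^{-1}$ is pre-compact.

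The locally connected case forms the main obstacle, since no compact fundamental domain need exist. Here my plan is to exploit openness of the identity component $G_0$. First, $\Gamma_0 := \Gamma' \cap G_0$ is a lattice in $G_0$, and the open subgroup $\Gamma' G_0$ has finite index in $G$: the quotient $G/\Gamma' G_0$ is discrete (since $\Gamma' G_0$ is open) and carries a finite Haar measure (as a quotient of the finite-volume $G/\Gamma'$), hence is finite. One may then decompose $C_G(x)$ into finitely many conjugates of $C_{\Gamma' G_0}(x)$, which in turn equals $C_{G_0}(x)$ because $\Gamma'$ centralizes $x$ and normalizes $G_0$. The remaining pre-compactness of $C_{G_0}(x) \cong G_0 / Z_{G_0}(x)$, where $Z_{G_0}(x) = G_0 \cap Z_G(x)$ contains the lattice $\Gamma_0$, is the main technical difficulty. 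While finite volume of $G_0 / Z_{G_0}(x)$ follows readily as a quotient of $G_0/\Gamma_0$, upgrading this to pre-compactness of the conjugacy class $C_{G_0}(x) \subseteq G$ is where local connectedness is genuinely used, via analysis of the continuous conjugation map restricted to the connected subgroup $G_0$.
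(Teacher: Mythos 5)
Your reduction of both parts to the single claim ``$[\Gamma:Z_\Gamma(x)]<\infty$ implies $x\in B(G)$'' is sound, and your treatment of the uniform case is correct and essentially the paper's argument (the paper bounds $C_G(\gamma)\subseteq \Omega\, C_\Gamma(\gamma)\,\Omega^{-1}$ using a compact set $\Omega$ with $G=\Omega\Gamma$; you instead pass to the finite-index, still cocompact, subgroup $\Gamma'=Z_\Gamma(x)$ and get $C_G(x)\subseteq K x K^{-1}$ — both work). The preliminary steps of your locally connected reduction are also fine: $\Gamma'G_0$ is open of finite index, $\Gamma_0=\Gamma'\cap G_0$ is a lattice in $G_0$, $C_G(x)$ is a finite union of conjugates of $C_{\Gamma'G_0}(x)=C_{G_0}(x)$, and $G_0/Z_{G_0}(x)$ carries a finite invariant measure.

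However, in the locally connected case there is a genuine gap, and it sits exactly at the step you yourself flag as ``the main technical difficulty'': passing from finiteness of the invariant measure on $G_0/Z_{G_0}(x)$ (equivalently, on $G/Z_G(x)$) to pre-compactness of the conjugacy class. This implication is false for general locally compact groups and is a substantive theorem for locally connected ones; it is precisely what the paper imports wholesale, applying the compactness theorem of Sit \cite{sit1976compactness} directly to $G/Z_G(x)$ after pushing the finite measure forward from $G/Z_\Gamma(\gamma)$ (with Lie-group alternatives in \cite{greenleaf1975compactness, greenleaf1974automorphisms}). Your proposal neither proves this step nor reduces it to a quotable result — ``analysis of the continuous conjugation map restricted to $G_0$'' is not an argument, and nothing in your setup (connectedness of $G_0$, existence of the lattice $\Gamma_0\subseteq Z_{G_0}(x)$) by itself yields pre-compactness of $C_{G_0}(x)$. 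Until that implication is supplied (e.g.\ by invoking Sit's theorem, which also renders your detour through $G_0$ unnecessary), the locally connected halves of both (i) and (ii) remain unproven. Note also that $G_0/Z_{G_0}(x)\to C_{G_0}(x)$ is only a continuous bijection, so the direction you need is ``quotient compact $\Rightarrow$ class compact'', which is fine, but it makes the unproven compactness of the quotient indispensable.
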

\begin{proof}
(i) Let $\gamma \in \Gamma$ be such that $C_{\Gamma} (\gamma )$ is finite. Then $\Gamma/Z_{\Gamma}(\gamma)$ is also finite. 

First, suppose that $G$ is locally connected. Let $\mu_{G/Z_{\Gamma} (\gamma)}$, $\mu_{G/\Gamma}$ and $\mu_{\Gamma / Z_{\Gamma} (\gamma)}$ be the invariant Radon measures on the coset spaces $G/Z_{\Gamma}(\gamma)$, $G/\Gamma$ and $\Gamma/Z_{\Gamma}(\gamma)$, respectively. Since $\mu_{G/\Gamma}$ and $\mu_{\Gamma / Z_{\Gamma} (\gamma)}$ are finite, it follows that also $\mu_{G/Z_{\Gamma} (\gamma)}$ is finite, see, e.g., \cite[Lemma 1.6]{raghunathan1972discrete}.
 Hence, since $Z_{\Gamma}(\gamma) \subseteq Z_G(\gamma)$, the continuous map $G/Z_{\Gamma}(\gamma) \to G/Z_G(\gamma)$ yields a finite $G$-invariant measure on $G/Z_G(\gamma)$. 
An application of \cite[Theorem]{sit1976compactness} yields that $G/Z_G(\gamma)$ is compact. Hence, $C_G(\gamma) \cong G/Z_G(\gamma)$ is compact, so that $\gamma \in B(G) = Z(G)$.

Secondly, if $\Gamma$ is a uniform lattice, then there exists a compact set $\Omega \subseteq G$ such that $G = \Omega \cdot \Gamma$. The conjugacy class $C_G (\gamma)$ is therefore given by
\[
C_G (\gamma) = \{ x \gamma x^{-1} : x \in G \} \subseteq \Omega \cdot C_{\Gamma} (\gamma) \cdot \Omega^{-1}, 
\]
whence pre-compact in $G$. Thus $\gamma \in B(G) = Z(G)$, and hence $C_{\Gamma} (\gamma ) = \{\gamma \}$. 

(ii) Let $x \in Z_G(\Gamma)$, so that $\Gamma \subseteq Z_G(x)$. 
Suppose first that $G$ is locally connected. The finite $G$-invariant measure on $G/\Gamma$ can be pushed forward to a finite $G$-invariant measure on $G/Z_G(x)$, which implies that $G/Z_G(x)$ is compact by \cite[Theorem]{sit1976compactness}. Hence, $C_G(x) \cong G/Z_G(x)$ is compact, and thus $x \in B(G) = Z(G)$.

Lastly, if $\Gamma$ is a uniform lattice, then the continuous surjective map $G / \Gamma \to G/Z_G(x)$ yields that $G/Z_G(x)$ is compact. Therefore, the continuous bijection $G/Z_G(x) \to C_G(x)$ yields that also $C_G (x)$ is compact,
and thus $x \in B(G) = Z(G)$.
\end{proof}

Lemma \ref{lem:lattices} applies, in particular, to arbitrary lattices in Lie groups. For this setting, there are alternative proofs of the used \cite[Theorem]{sit1976compactness}, see \cite[Theorem 1]{greenleaf1975compactness} and \cite[Theorem 2]{greenleaf1974automorphisms}. It is not known whether \Cref{lem:lattices} 
holds for non-uniform lattices in general unimodular groups.

\subsection{Projective discrete series} \label{sec:discreteseries}
A \emph{projective unitary representation} $(\pi, \Hpi)$ of $G$ on a separable Hilbert space $\Hpi$ is a strongly measurable map $\pi : G \to \mathcal{U}(\Hpi)$ satisfying
\[ \pi(x)\pi(y) = \sigma(x,y)\pi(xy), \quad x,y \in G, \]
for some function $\sigma : G \times G \to \mathbb{T}$. The function $\sigma$ necessarily forms a \emph{$2$-cocycle} on $G$, that is, it is a Borel function satisfying the identities
\begin{align*}
 \sigma(e, e) = 1 \quad \text{and} \quad   \sigma(x,y)\sigma(xy,z) &= \sigma(x,yz)\sigma(y,z) \quad \text{for all $x,y,z \in G$}. 
\end{align*}
A projective unitary representation with 2-cocycle $\sigma$ will simply be referred to as a $\sigma$-representation. For $\sigma \equiv 1$, it will simply be said that $\pi$ is a \emph{representation}.

A $\sigma$-representation $(\pi, \Hpi)$ is \emph{irreducible} if the only closed $\pi(G)$-invariant subspaces are $\{0\}$ and $\Hpi$. It is called \emph{square-integrable} if there exist nonzero $f,g \in \Hpi$ such that
\[ \int_G | \langle f, \pi(x) g \rangle |^2 \; d \mu_G (x) < \infty . \]
An irreducible, square-integrable $\sigma$-representation is called a \emph{discrete series $\sigma$-representation}, or a \emph{projective discrete series} if the associated cocycle is irrelevant.

The significance of a discrete series $\pi$ is the existence of a unique $d_{\pi} > 0$, called its \emph{formal degree}, such that the orthogonality relations
\[ \int_G \langle f, \pi(x) g \rangle \overline{ \langle f', \pi(x) g' \rangle } \; d \mu_G (x) = d_{\pi}^{-1} \langle f, f' \rangle \overline{ \langle g, g' \rangle } \]
hold for all $f,f',g,g' \in \Hpi$. 

\subsection{The projective kernel}
The \emph{projective kernel} of a $\sigma$-representation $\pi$ is defined by
\begin{equation}
    \pker{\pi} := \{ x \in G : \pi(x) \in \T \cdot I_{\Hpi} \}.  \label{eq:proj_ker}
\end{equation}
The $\sigma$-representation $\pi$ is \emph{projectively faithful} if $\pker{\pi} = \{e\}$.
 
Throughout, $\chi_\pi : \pker{\pi} \to \T$ denotes the measurable function satisfying $\pi(x) = \chi_\pi(x)I_{\Hpi}$ for all $x \in \pker{\pi}$. Then, for $f,g \in \Hpi$, $x \in G$ and $y \in \pker{\pi}$, 
\begin{equation}
    |\langle f, \pi(xy) g \rangle| = |\langle f, \overline{\sigma(x,y)} \pi(x) \chi_\pi(y) g \rangle|  = | \langle f, \pi(x) g \rangle |, \label{eq:invariance}
\end{equation}
so that $x\Ppi \mapsto |\langle f, \pi (x) g \rangle |$ is a well-defined function on the coset space $G/\Ppi$. 
 
\begin{lemma} \label{lem:proj_ker0}
 If $\pi$ is a $\sigma$-representation of $G$, then $\Ppi$ is a closed normal subgroup. If, in addition, $\pi$ is square-integrable, then $\Ppi$ is compact. 
 \end{lemma}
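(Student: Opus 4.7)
My plan is to establish the three assertions --- subgroup, normal, closed, and compact under square-integrability --- in turn, exploiting the cocycle identity and, in the last step, Weil's formula. For the algebraic part, the normalization $\sigma(e,e)=1$ gives $\pi(e)=I_{\Hpi}$, and for $x,y\in\Ppi$ the identities $\pi(xy)=\overline{\sigma(x,y)}\pi(x)\pi(y)$ and $\pi(x)\pi(x^{-1})=\sigma(x,x^{-1})I_{\Hpi}$ immediately exhibit $\pi(xy)$ and $\pi(x^{-1})$ as scalar multiples of $I_{\Hpi}$. For normality, two applications of the cocycle identity rewrite $\pi(yxy^{-1})$ as a unimodular scalar times $\pi(y)\pi(x)\pi(y^{-1})$; since $\pi(x)$ is scalar this equals a scalar multiple of $\pi(y)\pi(y^{-1})=\sigma(y,y^{-1})I_{\Hpi}$, and thus lies in $\T\cdot I_{\Hpi}$.

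For closedness I would use that $\pi$ is strongly continuous --- a standard upgrade from strong measurability for projective representations of a second countable locally compact group, possibly after passing to a cohomologous cocycle --- and note that $\T\cdot I_{\Hpi}\subseteq\mathcal{U}(\Hpi)$ is closed in the strong operator topology. The latter is immediate: if $\alpha_n I_{\Hpi}\to U$ strongly with $\alpha_n\in\T$, then $\alpha_n=\langle\alpha_n I_{\Hpi}f,f\rangle\to\langle Uf,f\rangle$ for any unit vector $f$, so $\alpha_n$ converges to some $\alpha\in\T$ and $U=\alpha I_{\Hpi}$. Therefore $\Ppi=\pi^{-1}(\T\cdot I_{\Hpi})$ is closed.

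For compactness under square-integrability, fix any nonzero $g\in\Hpi$. The orthogonality relations of Section \ref{sec:discreteseries} yield $\int_G|\langle g,\pi(x)g\rangle|^2 \dif{\mu_G(x)}=d_{\pi}^{-1}\|g\|^4<\infty$, and by \eqref{eq:invariance} the integrand is constant on right $\Ppi$-cosets. Weil's formula \eqref{eq:weil} applied to $H=\Ppi$ then gives
\[
d_{\pi}^{-1}\|g\|^4 = \mu_{\Ppi}(\Ppi)\int_{G/\Ppi}|\langle g,\pi(x)g\rangle|^2 \dif{\mu_{G/\Ppi}(x\Ppi)}.
\]
Continuity of $x\mapsto\langle g,\pi(x)g\rangle$, together with $\langle g,\pi(e)g\rangle=\|g\|^2\neq 0$, shows that the $G/\Ppi$-integral is strictly positive, hence $\mu_{\Ppi}(\Ppi)<\infty$. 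A locally compact group with finite Haar measure is compact, so $\Ppi$ is compact. The step that requires the most care is closedness, where strong continuity of $\pi$ (implicit in the setup but not in the bare definition of a strongly measurable $\sigma$-representation) must be invoked; the remaining steps are essentially bookkeeping with the cocycle identity, \eqref{eq:invariance}, and Weil's formula.
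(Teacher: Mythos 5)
Your algebraic computations (subgroup, normality via the cocycle identity) and your compactness argument are fine in outline, but the closedness step contains a genuine gap: the claimed ``standard upgrade'' from strong measurability to strong continuity, even after passing to a cohomologous cocycle, is not a theorem and is false in general. If $\pi$ were strongly continuous, then $\sigma(x,y)I_{\Hpi}=\pi(x)\pi(y)\pi(xy)^{*}$ would make $\sigma$ continuous; but a Borel $2$-cocycle need not be cohomologous to any continuous one. For instance, the multiplier of the spin projective representation of $\mathrm{SO}(3)$ corresponds to the central extension $\T\to(\mathrm{SU}(2)\times\T)/\mathbb{Z}_2\to\mathrm{SO}(3)$, which is a nontrivial circle bundle and hence admits no continuous section, so no scalar modification of that projective representation is strongly continuous. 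Since the lemma is stated for arbitrary $\sigma$-representations of an arbitrary second-countable unimodular $G$, your argument for closedness does not go through as written. The correct (and the paper's) route is to compose $\pi$ with the quotient map $p:\mathcal{U}(\Hpi)\to\mathcal{P}(\Hpi)=\mathcal{U}(\Hpi)/\T\cdot I_{\Hpi}$: the resulting map $\pi'=p\circ\pi$ is a Borel homomorphism between Polish groups, hence continuous (this is the content of the cited result of Varadarajan), and $\Ppi=\ker(\pi')$ is then automatically a closed normal subgroup --- which also renders your cocycle bookkeeping for the subgroup and normality claims unnecessary.

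The compactness part is essentially the paper's argument (invariance \eqref{eq:invariance} plus Weil's formula \eqref{eq:weil}), with two caveats. First, you again lean on continuity of $x\mapsto\langle g,\pi(x)g\rangle$ to get strict positivity of the integral over $G/\Ppi$; this is both unjustified (same issue as above) and unnecessary: having invoked the orthogonality relations, the full-group integral equals $d_{\pi}^{-1}\|g\|^{4}>0$, so if the quotient integral vanished the iterated integral would be $0$, a contradiction; positivity and hence $\mu_{\Ppi}(\Ppi)<\infty$ follow with no continuity input. Second, the orthogonality relations presuppose irreducibility, which is not among the hypotheses of the lemma (only square-integrability is assumed); the paper instead works directly with nonzero $f,g$ having a finite square-integral. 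This is a minor point, since the lemma is only applied to discrete series, but as stated your compactness proof uses more than the lemma assumes.
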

 \begin{proof}
 Let $\mathcal{P}(\Hpi) := \mathcal{U}(\Hpi) / \mathbb{T} \cdot I_{\Hpi}$ be the projective unitary group of $\Hpi$, equipped with the quotient topology relative to the strong operator topology on $\mathcal{U}(\Hpi)$. Let $p : \mathcal{U}(\Hpi) \to \mathcal{P}(\Hpi)$ be the canonical projection. 
 By \cite[Theorem 7.5]{varadarjan1985geometry}, the map $\pi' := p \circ \pi : G \to \mathcal{P}(\Hpi)$ is a continuous homomorphism, and hence $\pker{\pi} = \ker(\pi')$ is a closed normal subgroup. 
 
Suppose $\pi$ is square-integrable. Letting $f,g \in \Hpi \setminus \{0\}$, we apply \eqref{eq:weil} and \eqref{eq:invariance} to obtain
\begin{align*}
  \infty >  \int_G | \langle f, \pi(x) g \rangle|^2 \dif{\mu_G(x)} = \int_{\pker{\pi}} \dif{\mu_{\pker{\pi}}(y)} \int_{G/\pker{\pi}} |\langle f, \pi(x) g \rangle |^2 \dif{\mu_{G/\pker{\pi}}(x\pker{\pi})},
\end{align*}
and thus the Haar measure of $\pker{\pi}$ is finite, so that $\pker{\pi}$ must be compact. 
\end{proof}

Following \cite{kleppner1962structure}, $x \in G$ is called \emph{$\sigma$-regular} in $G$ if $\sigma(x,y) = \sigma(y,x)$ for all $y \in Z_G (x)$.

\begin{lemma}\label{lem:proj_ker}
Let $G$ be such that $B(G) = Z(G)$ and let $(\pi, \Hpi)$ be a discrete series $\sigma$-representation of $G$. 
Then the projective kernel coincides with the $\sigma$-regular elements of the center of $G$. In particular, the following are equivalent:
\begin{enumerate}
    \item[(i)] $\pi$ is projectively faithful.
    \item[(ii)] The only $\sigma$-regular element of $G$ with precompact conjugacy class is the identity.
\end{enumerate}
\end{lemma}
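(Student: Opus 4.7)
The plan is to establish the set identity $\pker{\pi} = \{x \in Z(G) : x \text{ is } \sigma\text{-regular in } G\}$ and then to deduce the equivalence (i)$\Leftrightarrow$(ii) from the hypothesis $B(G) = Z(G)$.

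For the inclusion $\pker{\pi}\subseteq Z(G)$ I would invoke \Cref{lem:proj_ker0}: since $\pker{\pi}$ is a compact normal subgroup of $G$, every $x \in \pker{\pi}$ has its conjugacy class $C_G(x)$ contained in $\pker{\pi}$, hence precompact, so $x \in B(G) = Z(G)$. To see that such $x$ is $\sigma$-regular, note that $\pi(x) = \chi_\pi(x) I_{\Hpi}$ is a scalar operator and hence commutes with every $\pi(y)$. Combining $\pi(x)\pi(y) = \pi(y)\pi(x)$ with the cocycle identities $\pi(x)\pi(y) = \sigma(x,y)\pi(xy)$ and $\pi(y)\pi(x) = \sigma(y,x)\pi(yx)$ together with the centrality relation $xy = yx$, cancellation of the common unitary $\pi(xy)$ forces $\sigma(x,y) = \sigma(y,x)$.

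For the reverse inclusion, suppose $x \in Z(G)$ is $\sigma$-regular, so that $\sigma(x,y) = \sigma(y,x)$ for every $y \in Z_G(x) = G$. The cocycle relation then gives
\[ \pi(x)\pi(y) \;=\; \sigma(x,y)\pi(xy) \;=\; \sigma(y,x)\pi(yx) \;=\; \pi(y)\pi(x), \qquad y \in G,\]
so $\pi(x)$ lies in the commutant of $\pi(G)$. Since a discrete series $\sigma$-representation is irreducible, Schur's lemma --- applied in the projective setting via the continuous homomorphism $\pi' = p \circ \pi : G \to \mathcal{P}(\Hpi)$ used in the proof of \Cref{lem:proj_ker0} --- forces $\pi(x) \in \T \cdot I_{\Hpi}$, i.e.\ $x \in \pker{\pi}$.

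Finally, the equivalence (i)$\Leftrightarrow$(ii) is immediate: the assumption $B(G) = Z(G)$ identifies the elements of $G$ with precompact conjugacy class with the central elements, so the set described in (ii) coincides with the set of $\sigma$-regular central elements, which by the identity above is $\pker{\pi}$. I do not anticipate a serious obstacle in the argument; the one place requiring minor care is the projective form of Schur's lemma, but this is standard once one uses that irreducibility of $\pi$ as a $\sigma$-representation is equivalent to irreducibility of the associated homomorphism into $\mathcal{P}(\Hpi)$.
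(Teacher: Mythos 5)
Your proof is correct and follows essentially the same route as the paper: the forward inclusion via compactness and normality of $\pker{\pi}$ (Lemma \ref{lem:proj_ker0}) together with $B(G)=Z(G)$ and cancellation of the unitary $\pi(xy)$ to get $\sigma$-regularity, and the reverse inclusion via the commutant argument $\pi(x)\in\pi(G)'=\T\cdot I_{\Hpi}$ from irreducibility. The paper phrases the Schur step directly in terms of the commutant rather than through the homomorphism into $\mathcal{P}(\Hpi)$, but this is only a cosmetic difference.
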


\begin{proof}
If an element $x \in Z(G)$ is $\sigma$-regular, then
\[ \pi(x)\pi(y) = \sigma(x,y) \pi(xy) = \sigma(y,x) \pi(yx) = \pi(y)\pi(x) \]
for all $y \in G$. Thus $\pi(x) \in \pi(G)' = \T \cdot I_{\Hpi}$ by irreducibility, so that $x \in \pker{\pi}$.
Conversely, if $x \in \pker{\pi}$, then $C_G(x) \subseteq \pker{\pi}$ since $\pker{\pi}$ is a normal subgroup of $G$. Since $\pker{\pi}$ is compact by \Cref{lem:proj_ker0}, it follows that $C_G(x)$ is pre-compact, hence $x \in B(G) = Z(G)$. Therefore, for any $y \in G$,
\begin{align*}
     \chi_\pi(x)\pi(y) &= \pi(x)\pi(y) = \sigma(x,y)\pi(xy) = \sigma(x,y) \pi(yx) \\
     &= \sigma(x,y)\overline{\sigma(y,x)} \pi(y)\pi(x) = \sigma(x,y)\overline{\sigma(y,x)} \chi_\pi(x) \pi(y) .
\end{align*}
Hence $\sigma(x,y) = \sigma(y,x)$ for all $y \in G$, so $x$ is $\sigma$-regular.

Since $B(G) = Z(G)$, it follows that the projective kernel coincides with $\sigma$-regular elements with pre-compact conjugacy classes. In particular, $\pker{\pi} = \{ e \}$ if and only if the only $\sigma$-regular element with pre-compact conjugacy class is the identity.
\end{proof}

A combination of the previous lemmata allows a proof of \Cref{thm:cvd_rel}: 

\begin{proof}[Proof of \Cref{thm:cvd_rel}]
Suppose $\gamma \in \Gamma$ is $\sigma$-regular in $\Gamma$ and $C_{\Gamma} (\gamma)$ is finite. For showing \eqref{eq:coincidence2}, we have to show that $\phi(\gamma) = \vol(G/\Gamma)d_{\pi} \delta_{\gamma,e}$. By \Cref{lem:lattices}(i), it follows that $C_{\Gamma} (\gamma)$ is trivial, so that $\gamma$ is in the center $Z(\Gamma)$ of $\Gamma$. In particular, this implies that $Z_{\Gamma} (\gamma) = \Gamma$. 
In addition, \Cref{lem:lattices}(ii) yields that $\gamma \in Z(G)$. Hence,
\begin{align*} \phi(\gamma) &= d_{\pi} \int_{G/\Gamma} \overline{\sigma(\gamma, y)} \sigma(y, y^{-1} \gamma y) \langle \eta, \pi(y^{-1} \gamma y) \eta \rangle \; d\mu_{G/\Gamma} {(y\Gamma)} \\
&= d_{\pi} \langle \eta, \pi(\gamma) \eta \rangle \int_{G/\Gamma} \overline{\sigma(\gamma,y)} \sigma(y,\gamma) \; d \mu_{G/\Gamma} {(y\Gamma)}.
\end{align*}
The function $\omega_{\gamma} \colon G \to \T$ given by $\omega_{\gamma} (y) = \overline{\sigma(\gamma,y)} \sigma(y,\gamma)$ is a homomorphism, since
\begin{align*}
    \omega_{\gamma}(yy') &= \overline{\sigma(\gamma,yy')} \sigma(yy',\gamma) = \overline{\sigma(\gamma,yy') \sigma(y,y')} \sigma(y,y') \sigma(yy',\gamma) \\
    &= \overline{\sigma(\gamma,y)\sigma(\gamma y,y')}  \sigma(y,y'\gamma) \sigma(y',\gamma) = \overline{ \sigma(\gamma,y)\sigma(y \gamma,y') \sigma(\gamma,y')} \sigma(y,\gamma y') \sigma(\gamma,y') \sigma(y',\gamma) \\
    &= \overline{\sigma(\gamma,y)\sigma(y\gamma,y') \sigma(\gamma,y')}  \sigma(y,\gamma)\sigma(y\gamma,y') \sigma(y',\gamma) = \overline{\sigma(\gamma,y) }  \sigma(y,\gamma) \overline{ \sigma(\gamma,y')} \sigma(y',\gamma) \\
    &= \omega_{\gamma} (y)\omega_{\gamma} (y').
\end{align*}
The $G$-invariance of the measure on $G/\Gamma$ gives that
\[ \int_{G/\Gamma} \omega_{\gamma} (y) \dif{\mu_{G/\Gamma}(y\Gamma)} = \int_{G/\Gamma} \omega_{\gamma} (y'y) \dif{\mu_{G/\Gamma}(y\Gamma)} = \omega_{\gamma} (y') \int_{G/\Gamma} \omega_{\gamma} (y) \dif{\mu_{G/\Gamma}(y\Gamma)} , \quad y' \in G, \]
which means that \[ \int_{G/\Gamma} \omega_{\gamma} (y\Gamma) \; d{\mu_{G/\Gamma}(y\Gamma)} = \begin{cases} \vol(G/\Gamma), & \text{if $\omega_{\gamma} \equiv 1$,} \\ 0, & \text{otherwise.} \end{cases} \]
Note that $\omega_{\gamma} \equiv 1$ if and only if $\sigma(\gamma,y) = \sigma(y,\gamma)$ for all $y \in G$, i.e., if and only if $\gamma$ is $\sigma$-regular in $G$. Since $\gamma \in Z(G)$, $\gamma$ is $\sigma$-regular in $G$ if and only if $\gamma = e$ by \Cref{lem:proj_ker} and the assumption that $\pi$ is projectively faithful. Hence,
\[ \phi(\gamma) = \vol(G/\Gamma) d_{\pi} \cdot \delta_{\gamma,e} , \]
as required.
\end{proof}

\begin{proof}[Proof of \Cref{thm:main_exponential}]
Since $G$ is exponential, we have $Z(G) = B(G)$ by \cite[Theorem 9.4]{greenleaf1974unbounded}. By \Cref{lem:proj_ker0}, the projective kernel $\Ppi$ of a discrete series $\pi$ must be compact, hence trivial, since $G$ does not contain nontrivial compact subgroups, see, e.g., \cite[Theorem 14.3.12]{hilgert2012structure}.
The conclusion follows therefore directly from \Cref{thm:cvd_rel}.
\end{proof}

\section{Discrete series modulo the projective kernel} \label{sec:relative_discreteseries}
This section considers projective representations obtained from genuine representations that are square-integrable modulo their projective kernel. Such projective representations are projectively faithful, 
and they form an important class to which \Cref{thm:cvd_rel} applies.

Let $(\rho,\mathcal{H}_{\rho})$ be an irreducible representation of a second countable unimodular group $H$.
It is called a \emph{relative discrete series} (modulo $\Prho$) if there exist non-zero $f, g \in \Hr$ such that
\[ \int_{H/\Prho}|\langle f, \rho(\dot{x}) g \rangle|^2 \; d\mu_{H/\Prho} (\dot{x}) < \infty ,\]
where $\dot{x} = x\Prho$ and $\mu_{H/\Prho}$ denotes Haar measure on $H/\Prho$. 

A relatively discrete series $(\rho, \Hr)$ of $H$ can be treated as a (projective) discrete series of $G := H / \Prho$. For this, choose a Borel section $s \colon  H/\Prho \to H$ of the canonical quotient map, and set $\pi := \rho \circ s$. Then a direct calculation shows that
\[ \pi(\dot{x})\pi(\dot{y}) = \sigma(\dot{x},\dot{y})\pi(\dot{x} \dot{y}) , \quad \dot{x}, \dot{y} \in G = H/\Prho, \]
where the 2-cocycle $\sigma$ is given by
\[ \sigma(\dot{x},\dot{y}) = \chi_{\rho} ( s(\dot{x})s(\dot{y})s(\dot{x} \dot{y})^{-1}), \quad \dot{x}, \dot{y} \in G = H/\Prho. \]
A different choice of section $s$ yields a 2-cocycle cohomologous to $\sigma$ and a representation unitarily equivalent to $\pi$. 

The following proposition is a special case of \Cref{thm:cvd_rel}.

\begin{proposition}\label{thm:cvd_rel2}
Let $(\rho, \Hr)$ be a relative discrete series (modulo $\Prho$) of $H$ and let $(\pi, \Hpi)$ be the associated $\sigma$-representation of $G = H/\Prho$. Let $\Gamma \leq G$ be a lattice. Suppose that 
$B(G) = Z(G)$ and that either $G$ is locally connected or that $\Gamma$ is uniform. Then 
$ C_{\phi} =  \vol(G/\Gamma)d_{\pi} \cdot I_{\ell^2} .$
\end{proposition}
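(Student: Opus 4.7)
The approach is to reduce directly to \Cref{thm:cvd_rel}. The only non-automatic hypotheses of that theorem are that $(\pi, \Hpi)$ is a discrete series $\sigma$-representation of $G = H/\Prho$ of formal degree $d_\pi > 0$ and that $\pker{\pi} = \{e\}$; the rest ($B(G) = Z(G)$, local connectedness or uniformity) are already assumed.

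First I would show that $\pi$ is an irreducible $\sigma$-representation of $G$. The cocycle identity was already recorded in \Cref{sec:relative_discreteseries}, so only irreducibility requires comment: any closed $\pi(G)$-invariant subspace of $\Hpi = \Hr$ is automatically $\rho(H)$-invariant, because every $x \in H$ factors as $x = y\, s(\dot{x})$ with $y \in \Prho$, and $\rho(y) \in \mathbb{T} \cdot I_{\Hr}$ acts by a scalar. Irreducibility of $\rho$ therefore transfers to $\pi$. For square-integrability, note that the invariance identity \eqref{eq:invariance} gives that $x \mapsto |\langle f, \rho(x) g\rangle|^2$ is constant on $\Prho$-cosets, so it descends to a function on $H/\Prho = G$ whose value at $\dot{x}$ coincides with $|\langle f, \pi(\dot{x}) g\rangle|^2$. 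Consequently,
\[ \int_G |\langle f, \pi(\dot{x}) g\rangle|^2 \dif{\mu_G(\dot{x})} = \int_{H/\Prho} |\langle f, \rho(x) g\rangle|^2 \dif{\mu_{H/\Prho}(\dot{x})}, \]
and the right-hand side is finite by the relative discrete series assumption on $\rho$. Hence $\pi$ is a discrete series $\sigma$-representation of $G$ with some formal degree $d_\pi > 0$.

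Next I would verify projective faithfulness. If $\dot{x} \in \pker{\pi}$, then $\rho(s(\dot{x})) = \pi(\dot{x}) \in \mathbb{T} \cdot I_{\Hr}$, so $s(\dot{x}) \in \Prho$. Since $s \colon H/\Prho \to H$ is a section of the quotient map $H \to H/\Prho$, the condition $s(\dot{x}) \in \Prho$ forces $\dot{x}$ to be the identity coset. Therefore $\pker{\pi} = \{e\}$.

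With irreducibility, square-integrability, and $\pker{\pi} = \{e\}$ established, all hypotheses of \Cref{thm:cvd_rel} are in force, and its conclusion is exactly the desired identity $C_\phi = \vol(G/\Gamma) d_\pi \cdot I_{\ell^2}$. The only step that requires mild care is the descent computation for the matrix coefficients, since one must keep track of the fact that although $\pi$ depends on the choice of Borel section $s$, the quantity $|\langle f, \pi(\dot{x}) g\rangle|$ does not, so the integral over $G$ is well-defined and equals the one over $H/\Prho$; beyond this bookkeeping there is no genuine obstacle.
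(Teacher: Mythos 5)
Your proposal is correct and follows essentially the same route as the paper: verify that $\pker{\pi}$ is trivial via the section $s$ (exactly the paper's argument) and then invoke \Cref{thm:cvd_rel}. The extra verification that $\pi$ is an irreducible square-integrable $\sigma$-representation is welcome detail, but the paper treats this as part of the standard passage from $\rho$ to $\pi$ described at the start of \Cref{sec:relative_discreteseries}, so there is no substantive difference.
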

\begin{proof}
Denote by $\pi$ a $\sigma$-representation of $G = H /\Prho$ obtained from $\rho$ via a Borel section $s$. If $x\Prho \in \Ppi \leq H/\Prho$, then $\rho(s(x\Prho)) = \pi(x\Prho) \in \T \cdot I_{\Hpi}$, so that $s(x\Prho) \in \Prho$. This implies that $x\Prho = s(x\Prho) \Prho = \Prho$, and thus 
$\Ppi = \{ e \Prho \}$. Therefore, it follows from \Cref{thm:cvd_rel} that $C_{\phi} = \vol(G/\Gamma) d_{\pi} \cdot I_{\ell^2}$.
\end{proof}

\begin{remark} \label{rem:exponential}
The projective kernel of an irreducible representation 
$(\rho, \Hr)$ of an exponential Lie group $H$ is connected 
by \cite[Theorem 2.1]{bekka1990complemented}, and thus $G= H/\Prho$ is again exponential and $B(G) = Z(G)$ by \cite[Theorem 9.4]{greenleaf1974unbounded}. 
In the particular case that $\rho$ is square-integrable modulo the center
$Z(H)$, then $\pker{\rho} = Z(H)$ by \cite[Theorem 2.1]{bekka1990complemented} combined with \cite[Theorem 5.3.4]{duflo1976sur} or \cite[Section 4.1]{rosenberg1978square}.
\end{remark}

\section{A non-nilpotent exponential $\SIZ$ group with a lattice.} \label{sec:example}

This section provides an example of a non-nilpotent exponential Lie group to which Theorem \ref{thm:main_exponential} is applicable, showing that it is not vacuous in the non-nilpotent case. 

\subsection{Completely solvable group}
Let $\mathfrak{g} = \Span_{\mathbb{R}} \{X_1, ..., X_5\}$ with non-zero Lie brackets
\[
[X_2, X_3]=X_1, \; [X_2, X_5] = X_2, \; [X_3, X_5 ] = - X_3, \; [X_4, X_5] = X_1.
\]
Then $\mathfrak{g}$ is completely solvable, i.e., it admits a sequence of ideals
\[
\{ 0 \} = \mathfrak{g}_0 \subset \mathfrak{g}_1 \subset \cdots \subset \mathfrak{g}_{4} \subset \mathfrak{g}_5 = \mathfrak{g}, \quad \text{with} \quad \dim(\mathfrak{g}_j) = j. 
\]
In particular, this shows that $\mathfrak{g}$ is an exponential solvable Lie algebra. Its  nilradical is given by $\mathfrak{n} = \Span_{\mathbb{R}} \{X_1, X_2, X_3 \} \oplus \mathbb{R} X_4$, so that $\mathfrak{g}$ is non-nilpotent. The center of $\mathfrak{g}$ is $\mathfrak{z} (\mathfrak{g}) = \mathbb{R} X_1$. 

Let $G$ be the connected, simply connected Lie group with Lie algebra $\mathfrak{g}$. Denote by $N$ and $T$ the connected Lie subgroups with Lie algebras $\mathfrak{n}$ and $\mathbb{R} X_5$, respectively. Then $G$ is a semi-direct product $G = N T$ with group multiplication 
\begin{align*}
    \begin{pmatrix}
    x \\ y \\ z \\ w \\ t
    \end{pmatrix} 
    \cdot 
    \begin{pmatrix}
    x' \\ y' \\ z' \\ w' \\ t'
    \end{pmatrix}
    = 
    \begin{pmatrix}
    x +x' - t w' - \frac{1}{2}(e^t z y' - e^{-t} y z') \\
    y + e^{-t} y' \\
    z + e^t z' \\
    w + w' \\
    t + t'
    \end{pmatrix}
\end{align*}
The center of $G$ is given by $Z(G) = \{ (x,0,0,0,0) : x \in \mathbb{R} \}$. 


\subsection{Lattice}

A lattice in $G$ can be given as follows, cf.\ \cite[p.\ 237]{diatta15lattices}: Let $n \in \mathbb{N}$, $n > 2$ and let $x_0 > 0$ be such that $(e^{t_0})^2 - ne^{t_0} + 1 = 0$. Let $v_1 = (1,1)$ and $v_2 = (e^{-t_0},e^{t_0})$ and $x_0 = (e^{-t_0} - e^{t_0})/2$. Then the set
\[ \Gamma = \Z \begin{pmatrix} x_0 \\ 0 \\ 0 \\ 0 \\ 0 \end{pmatrix} + \Z \begin{pmatrix} 0 \\ v_1 \\ 0 \\ 0 \end{pmatrix} + \Z \begin{pmatrix} 0 \\ v_2 \\ 0 \\ 0 \end{pmatrix} + \Z \begin{pmatrix} 0 \\ 0 \\ 0 \\ x_0/t_0 \\ 0 \end{pmatrix} + \Z \begin{pmatrix} 0 \\ 0 \\ 0 \\ 0 \\ t_0 \end{pmatrix} \]
forms a lattice in $G$.

The projection of this lattice via the quotient map to $G/Z$ yields a lattice in $G/Z$.

\subsection{Relative discrete series}

For showing that $G$ admits relative discrete series representations, it suffices (cf. \cite[\S 4.1]{rosenberg1978square}) to show that there exists a functional $\ell \in \mathfrak{g}^*$ such that 
\[
\mathfrak{g} (\ell) := \{X \in \mathfrak{g} : \ell([Y,X]) = 0, \; \forall \; Y \in \mathfrak{g} \} = \mathfrak{z}(\mathfrak{g}). 
\]
Denoting by $(\xi_1, ..., \xi_5)$ a dual basis of $\{X_1, ..., X_5\}$ in $\mathfrak{g}^*$, it is readily verified that $\mathfrak{g}(\xi_1) = \mathfrak{z}(\mathfrak{g})$. 

For $\xi := \xi_1 \in \mathfrak{g}^*$, consider the associated polarization $\mathfrak{p} = \Span_{\mathbb{R}} \{X_1, X_2, X_4 \}$. Then
\[
 \chi_{\xi} (\exp(X)) = e^{2\pi i \xi(X)}, \quad X \in \mathfrak{p},
\]
defines a unitary character of $P := \exp (\mathfrak{p}) \leq G$. The Kirillov-Bernat correspondence yields that  the induced representation $\rho_{\xi} = \ind_P^G (\chi_{\xi})$ forms an irreducible representation of $G$ that is square-integrable modulo $Z(G)$.




\section*{Acknowledgements}
U.E.\ gratefully acknowledges support from the The Research Council of Norway through project 314048. J.v.V. gratefully acknowledges support from 
the Austrian Science Fund (FWF) project J-4445.

\bibliographystyle{abbrv}
\bibliography{bib}

\begin{thebibliography}{10}

\bibitem{bader2019lattices}
U.~{Bader}, P.-E. {Caprace}, T.~{Gelander}, and S.~{Mozes}.
\newblock {Lattices in amenable groups}.
\newblock {\em {Fundam. Math.}}, 246(3):217--255, 2019.

\bibitem{bekka2004square}
B.~{Bekka}.
\newblock {Square integrable representations, von Neumann algebras and an
  application to Gabor analysis}.
\newblock {\em {J. Fourier Anal. Appl.}}, 10(4):325--349, 2004.

\bibitem{bekka1990complemented}
B.~{Bekka} and J.~{Ludwig}.
\newblock {Complemented *-primitive ideals in \(L^ 1\)-algebras of exponential
  Lie groups and of motion groups}.
\newblock {\em {Math. Z.}}, 204(4):515--526, 1990.

\bibitem{diatta15lattices}
A.~Diatta and B.~Foreman.
\newblock Lattices in contact {L}ie groups and 5-dimensional contact
  solvmanifolds.
\newblock {\em Kodai Math. J.}, 38(1):228--248, 2015.

\bibitem{duflo1976sur}
M.~{Duflo} and M.~{Rais}.
\newblock {Sur l'analyse harmonique sur les groupes de Lie resolubles}.
\newblock {\em {Ann. Sci. \'Ec. Norm. Sup\'er. (4)}}, 9:107--144, 1976.

\bibitem{enstad2021density}
U.~Enstad.
\newblock The density theorem for projective representations via twisted group
  von {N}eumann algebras.
\newblock {\em Preprint. arXiv 2103.14467}.

\bibitem{greenleaf1974automorphisms}
F.~P. Greenleaf, M.~Moskowitz, and L.~Preiss-Rothschild.
\newblock Automorphisms, orbits, and homogeneous spaces of non-connected {L}ie
  groups.
\newblock {\em Math. Ann.}, 212:145--155, 1974/75.

\bibitem{greenleaf1974unbounded}
F.~P. {Greenleaf}, M.~{Moskowitz}, and L.~P. {Rothschild}.
\newblock {Unbounded conjugacy classes in Lie groups and location of central
  measures}.
\newblock {\em {Acta Math.}}, 132:225--243, 1974.

\bibitem{greenleaf1975compactness}
F.~P. {Greenleaf}, M.~{Moskowitz}, and L.~P. {Rothschild}.
\newblock {Compactness of certain homogeneous spaces of finite volume}.
\newblock {\em {Am. J. Math.}}, 97:248--259, 1975.

\bibitem{hilgert2012structure}
J.~{Hilgert} and K.-H. {Neeb}.
\newblock {\em {Structure and geometry of Lie groups}}.
\newblock Berlin: Springer, 2012.

\bibitem{kleppner1962structure}
A.~{Kleppner}.
\newblock {The structure of some induced representations}.
\newblock {\em {Duke Math. J.}}, 29:555--572, 1962.

\bibitem{moscovici1978quantization}
H.~{Moscovici} and A.~{Verona}.
\newblock {Quantization and projective representations of solvable Lie groups}.
\newblock {\em {Trans. Am. Math. Soc.}}, 246:173--192, 1978.

\bibitem{moskowitz1978some}
M.~{Moskowitz}.
\newblock {Some remarks on automorphisms of bounded displacement and bounded
  cocycles}.
\newblock {\em {Monatsh. Math.}}, 85:323--336, 1978.

\bibitem{mostow1962homogeneous}
G.~D. {Mostow}.
\newblock {Homogeneous spaces with finite invariant measure}.
\newblock {\em {Ann. Math. (2)}}, 75:17--37, 1962.

\bibitem{raghunathan1972discrete}
M.~S. {Raghunathan}.
\newblock {\em {Discrete subgroups of Lie groups}}, volume~68.
\newblock Springer-Verlag, Berlin, 1972.

\bibitem{romero2021density}
J.~L. Romero and J.~T. Van~Velthoven.
\newblock The density theorem for discrete series representations restricted to
  lattices.
\newblock {\em Expo. Math.}, To Appear.

\bibitem{rosenberg1978square}
J.~{Rosenberg}.
\newblock {Square-integrable factor representations of locally compact groups}.
\newblock {\em {Trans. Am. Math. Soc.}}, 237:1--33, 1978.

\bibitem{sit1975on}
K.-Y.~C. {Sit}.
\newblock {On bounded elements of linear algebraic groups}.
\newblock {\em {Trans. Am. Math. Soc.}}, 209:185--198, 1975.

\bibitem{sit1976compactness}
K.-Y.~C. {Sit}.
\newblock {Compactness of certain homogeneous spaces of locally compact
  groups}.
\newblock {\em {Proc. Am. Math. Soc.}}, 55:170--174, 1976.

\bibitem{sund1978multiplier}
T.~{Sund}.
\newblock {Multiplier representations of exponential Lie groups}.
\newblock {\em {Math. Ann.}}, 232:287--290, 1978.

\bibitem{tits1964automorphismes}
J.~{Tits}.
\newblock {Automorphismes \`a deplacement borne des groupes de Lie}.
\newblock {\em {Topology}}, 3:97--107, 1964.

\bibitem{varadarjan1985geometry}
V.~S. {Varadarajan}.
\newblock {Geometry of quantum theory. 2nd ed}.
\newblock {New York: Springer-Verlag.}, 1985.

\end{thebibliography}

\end{document}